\numberwithin{equation}{section} \numberwithin{figure}{section}
\numberwithin{table}{section} \setlength{\oddsidemargin}{0in}
\theoremstyle{plain}
\theoremstyle{definition}
\newtheorem{thm}{Theorem}
\newtheorem{pro}{Proposition}
\newtheorem{rem}{Remark}
\numberwithin{equation}{section} \numberwithin{lem}{section}
\numberwithin{thm}{section} \numberwithin{cor}{section}
\numberwithin{pro}{section} \numberwithin{rem}{section}
\begin{document}

\title[Lower bounds for the vectorial Allen-Cahn energy]{Optimal energy growth lower bounds for a class of solutions to the vectorial Allen-Cahn equation}%about the article that should go on the front page should be
%placed here. General acknowledgments should be placed at the end of the article.}

%\subtitle{Do you have a subtitle?\\ If so, write it here}

%\titlerunning{Short form of title}        % if too long for running head

\author{Christos Sourdis} \address{Department of Mathematics and Applied Mathematics, University of
Crete.}
              %Tel.: +123-45-678910\\
              %Fax: +123-45-678910\\
              \email{csourdis@tem.uoc.gr}           %  \\

%\authorrunning{Short form of author list} % if too long for running head

%\institute{F. Author \at
%              first address \\
%              Tel.: +123-45-678910\\
%              Fax: +123-45-678910\\
%              \email{fauthor@example.com}           %  \\
%%             \emph{Present address:} of F. Author  %  if needed
%           \and
%           S. Author \at
%              second address
%}

%\date{Received: date / Accepted: date}
% The correct dates will be entered by the editor

\maketitle

\begin{abstract}
We prove optimal lower bounds for the growth of the energy over
 balls of minimizers to the vectorial Allen-Cahn energy in two
spatial dimensions, as the radius tends to infinity. In the case of
radially symmetric solutions, we can prove a stronger result in all
dimensions.
\end{abstract}
%By variational methods, we provide a simple proof of  existence of a
%heteroclinic orbit to the Hamiltonian system $u''=\nabla W(u)$ that
%connects the two global minima of a double-well potential $W$.
%In
%contrast to previous approaches, we do not impose any   condition
%near the global minima.
%Moreover, we consider several inhomogeneous extensions.
% Include keywords, PACS and mathematical
%subject classification numbers as needed.
%\keywords{variational methods \and heteroclinic connection \and
%Hamiltonian system}
% \PACS{PACS code1 \and PACS code2 \and more}
% \subclass{MSC code1 \and MSC code2 \and more}
%\end{abstract}

%\section{Introduction and statement of the main result}

Consider the semilinear elliptic system
\begin{equation}\label{eqEq}
\Delta u=\nabla W(u)\ \ \textrm{in}\ \ \mathbb{R}^n,\ \ n\geq 1,
\end{equation}
where $W:\mathbb{R}^m\to \mathbb{R}$, $m\geq 1$, is sufficiently
smooth and nonnegative. This system has variational structure, and
solutions in a smooth bounded  domain $\Omega\subset \mathbb{R}^n$
are critical points of the energy
\[
E(v;\Omega)=\int_{\Omega}^{}\left\{\frac{1}{2}|\nabla v|^2+ W(v)
\right\}dx
\]
(subject to their own boundary conditions). A solution $u\in
C^2(\mathbb{R}^n;\mathbb{R}^m)$ is called globally minimizing if
\[
E(u;\Omega)\leq E(u+\varphi;\Omega)
\]
for every smooth bounded  domain $\Omega\subset \mathbb{R}^n$ and
for every $\varphi\in W^{1,2}_0(\Omega;\mathbb{R}^m)\cap
L^\infty(\Omega;\mathbb{R}^m)$ (see also \cite{fuscoCPAA} and the
references therein).

In the scalar case, namely $m=1$, Modica \cite{modica} used the
maximum principle to show that every bounded solution to
(\ref{eqEq}) satisfies the pointwise gradient bound
\begin{equation}\label{eqmodica}
\frac{1}{2}|\nabla u|^2\leq W(u) \ \ \textrm{in}\ \ \mathbb{R}^n,
\end{equation}
(see also \cite{cafamodica} and \cite{farinaFlat}). Using this,
together with Pohozaev identities, it was shown in \cite{modicaProc}
that the energies of such solutions satisfy the following
monotonicity property:
\begin{equation}\label{eqmonotonicity}
\frac{d}{dR}\left(\frac{1}{R^{n-1}}\int_{B(x_0,R)}^{}\left\{\frac{1}{2}|\nabla
u|^2+ W\left(u\right) \right\}dx\right)\geq 0,\ \ R>0,\ x_0\in
\mathbb{R}^n,
\end{equation}
where $B(x_0,R)$ stands for the $n$-dimensional ball of radius $R$
that is centered at  $x_0$. Combining the above two relations yields
that, if $x_0\in \mathbb{R}^n$, the ``potential'' energy of each
bounded nonconstant solution to the scalar problem satisfies the
lower bound:
\begin{equation}\label{eqenergyLowerModi}
\int_{B(x_0,R)}^{}W\left(u\right)dx\geq c R^{n-1},\ \ R>0,\ \
\textrm{for some}\ c>0.
\end{equation}
In the scalar case, the most famous representative of this class of
equations is the Allen-Cahn equation
\begin{equation}\label{eqallenSca}
\Delta u=u^3-u\ \ \textrm{in}\ \ \mathbb{R}^n, \ \ \textrm{where}\ \
W(u)=\frac{(1-u^2)^2}{4},
\end{equation}
which is used to model phase transitions (see \cite{farinaState} and
the references therein).

In the vectorial case, that is when $m\geq 2$, in the absence of the
maximum principle, it is not true in general that the gradient bound
(\ref{eqmodica}) holds (see \cite{smyrnelis} for a counterexample).
Nevertheless, it was shown by Alikakos \cite{alikakosBasicFacts}
using a stress energy tensor (see also \cite{serfaty}), and earlier
by Caffarelli and Lin \cite{caffareliLin} via Pohozaev identities,
that the energy of every solution to (\ref{eqEq}) (not necessarily
bounded) satisfies the following weak monotonicity property:
\begin{equation}\label{eqmonotoniWeak}
\frac{d}{dR}\left(\frac{1}{R^{n-2}}\int_{B(x_0,R)}^{}\left\{\frac{1}{2}|\nabla
u|^2+ W\left(u\right) \right\}dx\right)\geq 0,\ \ R>0,\ x_0\in
\mathbb{R}^n, \ n\geq 2.
\end{equation}
In fact, as was observed in the former reference, if a solution $u$
satisfies Modica's gradient bound (\ref{eqmodica}), it follows that
its energy satisfies the strong monotonicity property
(\ref{eqmodica}). Armed with (\ref{eqmonotoniWeak}), and doing some
more work in the case $n=2$ (see \cite{alikakosBasicFacts}), it is
easy to show that, if $x_0\in \mathbb{R}^n$, the energy of each
nonconstant solution to the system (\ref{eqEq}) satisfies:
\begin{equation}\label{eqGrande}
\int_{B(x_0,R)}^{}\left\{\frac{1}{2}|\nabla u|^2+ W\left(u\right)
\right\}dx\geq \left\{\begin{array}{ll}
                        c R^{n-2} & \textrm{if}\ n\geq 3,   \\
                          &     \\
                        c \ln R  & \textrm{if}\ n=2,
                      \end{array}
\right.
\end{equation}
for all $R>1$ and some $c>0$.

The above results hold for arbitrary smooth and nonnegative $W$. If
additionally $W$ vanishes at least at one point, it is easy to cook
up a suitable competitor for the energy and show that bounded
globally minimizing solutions satisfy
\[
\int_{B(x_0,R)}^{}\left\{\frac{1}{2}|\nabla u|^2+ W\left(u\right)
\right\}dx\leq CR^{n-1},\ \ R>0,\ x_0\in \mathbb{R}^n,
\]
for some $C>0$ (see for example \cite[Rem. 2.3]{ambrosioCabre}).
 The
system (\ref{eqEq}) with $W\geq 0$ vanishing at a finite number of
global minima is used to model multi-phase transitions (see
\cite{bronReih} and the references therein). In this case, the
system (\ref{eqEq}) is frequently referred to as the vectorial
Allen-Cahn equation. Under appropriate assumptions (symmetries or
non-degeneracy assumptions), it is possible to construct by
variational methods ``heteroclinic'' solutions that ``connect'' the
global minima of $W$ (see \cite{fuscoPreprint,guiSchatz,saez} and
the references therein); the energy of these solutions over
$B(x_0,R)$ is of order $R^{n-1}$ as $R\to \infty$. This observation
implies that the estimate (\ref{eqGrande}) is far from optimal for
this class of $W$'s. On the other side, for the case of the
Ginzburg-Landau system
\[
\Delta u=\left(|u|^2-1 \right)u,\ \ u:\mathbb{R}^2\to \mathbb{R}^2,
\ \ \left(\textrm{here}\ W(u)=\frac{\left(1-|u|^2\right)^2}{4}\
\textrm{vanishes on}\ \mathbb{S}^1 \right),
\]
there are globally minimizing solutions with   energy  over
$B(x_0,R)$ of order $\ln R$ as $R\to \infty$ (see
\cite{bethuel,serfaty} and the references therein). In other words,
the estimate (\ref{eqGrande}) captures the optimal growth in the
case of globally minimizing solutions to the Ginzburg-Landau system.

In this note, we will establish the corresponding optimal lower
bound in the case of the phase transition case when  $n=2$. In fact,
we will prove the analog of the lower bound
(\ref{eqenergyLowerModi}). As will be apparent, our proof does not
work if $n\geq 3$. To the best of our knowledge, there is no related
published result. Our approach combines ideas from two disciplines:
\begin{itemize}
  \item We
 adapt to this setting clearing-out arguments from the study of the
 Ginzburg-Landau system, see \cite{bethuel}.
  \item We employ variational maximum principles for globally minimizing
  solutions that have been  recently devised and  used for the study
  of the vectorial Allen-Cahn equation in \cite{alikakosPreprint}.
\end{itemize}

Our main   result is the following.

\begin{thm}\label{thmMine}
Assume that $W\in C^1(\mathbb{R}^m;\mathbb{R})$, $m\geq 1$, and that
there exist finitely many $N\geq 1$ points $a_i\in \mathbb{R}^m$
such that
\begin{equation}\label{eqpoints}
W(u)>0\ \ \textrm{in}\ \ \mathbb{R}^m\setminus \{a_1,\cdots, a_N \},
\end{equation}
and there exists small $r_0>0$ such that the functions
\begin{equation}\label{eqmonot}
r\mapsto W(a_i+r\nu),\ \ \textrm{where}\ \ \nu \in \mathbb{S}^1, \ \
\textrm{are strictly increasing\ for}\ r\in (0,r_0),\ \
i=1,\cdots,N.
\end{equation}
Moreover, we assume that
\begin{equation}\label{eqinf}\liminf_{|u|\to \infty}
W(u)>0.\end{equation}

 If $u\in C^2(\mathbb{R}^2;\mathbb{R}^m)$ is
a bounded, nonconstant, and globally minimizing solution to the
elliptic system
\begin{equation}\label{eqEq2}
\Delta u=\nabla W(u)\ \ \textrm{in}\ \ \mathbb{R}^2,
\end{equation}
for any $x_0\in \mathbb{R}^2$, there exist   constants $c_0, R_0>0$
such that
\[
\int_{B(x_0,R)}^{}W\left(u(x) \right) dx\geq c_0R\ \ \textrm{for}\ \
R\geq R_0.
\]
\end{thm}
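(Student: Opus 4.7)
The plan is a proof by contradiction: assume there is a sequence $R_k \to \infty$ along which $\int_{B(x_0, R_k)} W(u)\,dx / R_k \to 0$, and derive that $u$ must be identically equal to some $a_{i^\ast}$, contradicting nonconstancy. The argument proceeds in three stages: a clearing-out step along a well-chosen large circle (in the spirit of \cite{bethuel}), propagation of the resulting trapping inward via the variational maximum principle of \cite{alikakosPreprint}, and a concluding subharmonic Liouville argument on the plane.

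First, note that interior elliptic regularity applied to (\ref{eqEq2}), combined with boundedness of $u$, yields a uniform gradient bound $|\nabla u| \le C_0$ on $\mathbb{R}^2$. Compactness of $\overline{u(\mathbb{R}^2)}$ together with (\ref{eqpoints}) and (\ref{eqinf}) supplies $\rho_0, \beta > 0$ such that whenever $\mathrm{dist}(u(y), \{a_1,\dots,a_N\}) \ge r_0$, one has $W(u) \ge \beta$ throughout $B(y, \rho_0)$. Shrinking $r_0$ if necessary so that the balls $B(a_i, r_0)$ are pairwise disjoint (this preserves (\ref{eqmonot})), apply Fubini in polar coordinates to the inequality $\int_{B(x_0, R_k)} W(u)\,dx = o(R_k)$ to select a radius $r_k \in [R_k/2, R_k]$ on which $\int_{\partial B(x_0, r_k)} W(u)\,dS = o(1)$. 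Any $y \in \partial B(x_0, r_k)$ with $\mathrm{dist}(u(y), \{a_i\}) \ge r_0$ would, via the gradient bound, contribute a uniform positive amount to this circle integral, so no such $y$ exists for $k$ large; hence $u(\partial B(x_0, r_k)) \subset \bigcup_i B(a_i, r_0)$. Connectedness of the circle together with disjointness of these balls yields a single index $i_k$ with $u(\partial B(x_0, r_k)) \subset B(a_{i_k}, r_0)$, which after passing to a subsequence can be taken independent of $k$, say $i^\ast$.

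Invoke the variational maximum principle from \cite{alikakosPreprint}: since $u$ is a globally minimizing solution on $B(x_0, r_k)$ with boundary values in $B(a_{i^\ast}, r_0)$ and $W$ is strictly radially monotone there by (\ref{eqmonot}), one obtains $u(B(x_0, r_k)) \subset B(a_{i^\ast}, r_0)$, and letting $k \to \infty$ yields $u(\mathbb{R}^2) \subset B(a_{i^\ast}, r_0)$. To conclude, set $h(x) := |u(x) - a_{i^\ast}|^2$; a direct computation using (\ref{eqEq2}) gives
\[
\tfrac{1}{2} \Delta h = |\nabla u|^2 + (u - a_{i^\ast}) \cdot \nabla W(u) \ge 0,
\]
with both summands nonnegative by (\ref{eqmonot}). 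Thus $h$ is subharmonic on $\mathbb{R}^2$ and bounded above by $r_0^2$, so the classical Liouville theorem for bounded-above subharmonic functions on the plane forces $h$ to be constant. Both summands of $\tfrac{1}{2}\Delta h$ must then vanish identically; in particular $|\nabla u| \equiv 0$, so $u$ is a constant, and by (\ref{eqmonot}) the only critical point of $W$ inside $B(a_{i^\ast}, r_0)$ is $a_{i^\ast}$ itself, so $u \equiv a_{i^\ast}$. This contradicts the hypothesis that $u$ is nonconstant.

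The main obstacle is the propagation stage: one must carefully match the precise hypotheses of the variational maximum principle of \cite{alikakosPreprint}---which tie the minimality of $u$ to the specific radial monotonicity geometry of $W$ near $a_{i^\ast}$---to the boundary configuration produced by the clearing-out. The remaining ingredients, namely the Fubini-based selection of a good circle with small potential-energy integral and the concluding Liouville argument, are comparatively routine and rely only on standard tools; it is also here that the restriction to $n=2$ is essential, since the Liouville theorem for bounded subharmonic functions fails in dimensions $n \ge 3$.
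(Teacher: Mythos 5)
Your proposal is correct, and its first two stages (the co-area selection of a good circle, the clearing-out via the interior gradient bound, and the propagation inward by the variational maximum principle of \cite{alikakosPreprint}) coincide with the paper's argument. Where you genuinely diverge is the conclusion. The paper extracts more from the clearing-out step: it shows $\max_{|x|=s_j}W(u(x))=o(1)$, which together with (\ref{eqpoints}), (\ref{eqinf}) and compactness of $\overline{u(\mathbb{R}^2)}$ forces $\max_{|x|=s_j}|u(x)-a_{i_j}|=o(1)$; the maximum principle then traps $u$ in balls around $a_{i_0}$ of radius tending to \emph{zero}, so $u\equiv a_{i_0}$ follows immediately by letting $j\to\infty$, with no further machinery. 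You instead settle for trapping at the fixed radius $r_0$, obtaining $u(\mathbb{R}^2)\subset \overline{B(a_{i^\ast},r_0)}$, and close the argument with the subharmonicity of $h=|u-a_{i^\ast}|^2$ (valid since $C^1$ strict monotonicity in (\ref{eqmonot}) does give $\nu\cdot\nabla W(a_{i^\ast}+r\nu)\ge 0$ on $(0,r_0)$, hence $(u-a_{i^\ast})\cdot\nabla W(u)\ge 0$ on the closed ball by continuity) plus the Liouville theorem for bounded-above subharmonic functions on the plane. This works, at the cost of an extra classical ingredient; the paper's route is shorter because the quantitative decay of $W(u)$ on the good circles is already available for free from the clearing-out. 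One small quibble: you locate the essential use of $n=2$ in the Liouville step, but in both proofs the dimension restriction already enters at the co-area stage — for $n\ge 3$ the natural contradiction hypothesis $\int_{B}W=o(R^{n-1})$ only yields spheres with $\int_{\partial B}W=o(R^{n-2})$, which is not $o(1)$, so the clearing-out collapses there first. Also, your final identification $u\equiv a_{i^\ast}$ is unnecessary (and its justification that $a_{i^\ast}$ is the only critical point in the ball does not quite follow from (\ref{eqmonot}) alone); constancy of $u$ already contradicts the hypothesis.
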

\begin{proof}
Since the problem is translation invariant, without loss of
generality, we may carry out the proof for $x_0=0$.

 Suppose, to the
contrary, that there exists  a bounded, nonconstant, and globally
minimizing solution $u$ and radii $R_j\to \infty$ such that
\begin{equation}\label{eqcontra}
\int_{B(0,R_j)}^{}W\left(u(x) \right) dx=o(R_j)\ \ \textrm{as}\ \
j\to \infty.
\end{equation}
By the co-area formula (see for instance  \cite[Ap. C]{evans}), the
nonnegativity of $W$, and the mean value theorem, there exist
\[
s_j\in \left(\frac{R_j}{2},R_j\right)
\]
such that
\[
\int_{\partial B(0,s_j)}^{}W\left(u(x) \right) dS(x)=o(1)\ \
\textrm{as}\ \ j\to \infty.
\]

From this, as in the clearing-out lemma of \cite{bethuel}, it
follows that
\begin{equation}\label{eqbdry}
\max_{|x|=s_j}W\left(u(x) \right)=o(1)\ \ \textrm{as}\ \ j\to
\infty.
\end{equation}
Indeed, if not, passing to a subsequence if necessary, there would
exist $c_1>0$ and $x_j\in
\partial B(0,s_j)$ such that
\[
W\left(u(x_j) \right)\geq c_1\ \ \textrm{for}\ \ j\geq 1.
\]
On the other hand, since $u$ is bounded in $\mathbb{R}^2$, by
standard interior elliptic regularity estimates (see
\cite{evans,Gilbarg-Trudinger}), the same is true for $\nabla u$.
Hence, there exists $r_*>0$ such that
\[
W\left(u(x) \right)\geq \frac{c_1}{2},\ \ x\in B(x_j,r_*),\ \
\textrm{for}\ \ j\geq 1,
\]
which implies that
\[
\int_{\partial B(0,s_j)}^{}W\left(u(x) \right) dS(x)\geq
\frac{c_1}{2} \mathcal{H}^{1}\left\{B(x_j,r_*)\cap \partial B(0,s_j)
\right\}\geq c_2\ \ \textrm{for}\ \ j\geq 1.
\]
for some $c_2>0$. Clearly, the above relation contradicts
(\ref{eqcontra}).

In view of (\ref{eqinf}), relation (\ref{eqbdry}) implies that there
exist $i_j\in \{1,\cdots, N \}$ such that
\[
\max_{|x|=s_j}\left|u(x)-a_{i_j} \right|=o(1)\ \ \textrm{as}\ \ j\to
\infty.
\]
By virtue of (\ref{eqmonot}), exploiting the fact that $u$ is a
globally minimizing solution, we can apply a recent variational
maximum principle from \cite{alikakosPreprint} to deduce that
\[
\max_{|x|\leq s_j}\left|u(x)-a_{i_j} \right|\leq
\max_{|x|=s_j}\left|u(x)-a_{i_j} \right|\ \ \textrm{for}  \ \ j\gg
1,
\]
(so that the righthand side is smaller than $r_0/2$). The idea is
that, if this is violated, one can construct a suitable function
which agrees with $u$ on $\partial B(0,s_j)$ but with less energy,
thus contradicting the minimality of $u$.
 The above two
relations imply the existence of an $i_0\in \{1,\cdots,N\}$ such
that
\[
\max_{|x|\leq s_j}\left|u(x)-a_{i_0} \right|=o(1)\ \ \textrm{as}\ \
j\to \infty.
\]
Now, letting $j\to \infty$ in the above relation yields that
$u\equiv a_{i_0}$ which contradicts our assumption that $u$ is
nonconstant.
\end{proof}

\begin{rem}
Clearly, the monotonicity condition (\ref{eqmonot}) is satisfied if
the global minima are nondegenerate (the Hessian matrix $D^2W(a_i)$
is invertible for all $i=1,\cdot, N$).
\end{rem}

\begin{rem}
In dimensions $n\geq 2$, a Liouville type theorem of Fusco
\cite{fuscoCPAA} tells us that, if $W$ is as in Theorem
\ref{thmMine}, nonconstant global minimizing solutions to
(\ref{eqEq}) must enter any
%arbitrarily small
neighborhood of at least two of the global minima. Intuitively, this
suggests that the optimal lower bound for the growth of the energy,
that is kinetic (or interfacial) and potential, should be of order
$R^{n-1}$ in all dimensions. In this regard, see
\cite{cafa-cordoba,modica} for the scalar case ($m=1$), provided
that the global minima are nondegenerate.
\end{rem}

If we restrict our attention to radially symmetric solutions, we
have a  stronger result which follows at once from the following
proposition which is of independent interest.

\begin{pro} Let $W\in C^{1}(\mathbb{R}^m;\mathbb{R})$, $m\geq 1$, possibly sign-changing. If $u\in
C^2(\mathbb{R}^n;\mathbb{R}^m)$, $n\geq 2$, satisfies (\ref{eqEq}),
we have that
\begin{equation}\label{eq1}
\frac{1}{2}\left|u'(R)\right|^2\leq W\left(u(R) \right)-W\left(u(0)
\right),\ \ R>0,
\end{equation}
and
\begin{equation}\label{eq2}
\frac{d}{dR}\left(\frac{1}{R^n}\int_{B(0,R)}^{}\left\{\frac{n-2}{2}|\nabla
u |^2+ n W(u)\right\} dx\right)\geq 0,\ \ R>0.
\end{equation}
\end{pro}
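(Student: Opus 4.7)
Since $u$ is (implicitly) radially symmetric, writing $u=u(r)$ with $r=|x|$, equation (\ref{eqEq}) becomes the ODE
\begin{equation*}
u''(r)+\frac{n-1}{r}u'(r)=\nabla W(u(r)),\quad r>0,
\end{equation*}
with $u'(0)=0$ forced by the $C^2$ hypothesis at the origin. Both conclusions will be reduced to one-dimensional identities in $r$.

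For (\ref{eq1}), I would take the inner product of the radial ODE with $u'(r)$. The right-hand side equals $\frac{d}{dr}W(u(r))$, and after rearranging one obtains the Hamiltonian identity
\begin{equation*}
\frac{d}{dr}\left(\frac{1}{2}|u'(r)|^2-W(u(r))\right)=-\frac{n-1}{r}|u'(r)|^2\leq 0,
\end{equation*}
using $n\geq 2$. Hence $\frac{1}{2}|u'|^2-W(u)$ is nonincreasing in $r$; since its limit at $r=0^+$ equals $-W(u(0))$ by $u'(0)=0$, inequality (\ref{eq1}) drops out.

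For (\ref{eq2}), I would run a Pohozaev-style computation: multiply the radial ODE by $r^n u'(r)$ and regroup using $r^n\langle u'',u'\rangle=\frac{d}{dr}\left(\frac{r^n}{2}|u'|^2\right)-\frac{n}{2}r^{n-1}|u'|^2$ together with $r^n\frac{d}{dr}W(u)=\frac{d}{dr}(r^nW(u))-nr^{n-1}W(u)$. The terms collapse into the single exact-derivative identity
\begin{equation*}
\frac{d}{dr}\left[r^n\left(W(u)-\frac{1}{2}|u'|^2\right)\right]=r^{n-1}\left(\frac{n-2}{2}|u'|^2+nW(u)\right).
\end{equation*}
Integrating from $0$ to $R$, the boundary contribution at $r=0$ vanishes thanks to the factor $r^n$, and converting the right-hand side to a volume integral via spherical coordinates produces
\begin{equation*}
\frac{1}{R^n}\int_{B(0,R)}\left\{\frac{n-2}{2}|\nabla u|^2+nW(u)\right\}dx=\omega_{n-1}\left(W(u(R))-\frac{1}{2}|u'(R)|^2\right),
\end{equation*}
where $\omega_{n-1}$ denotes the $(n-1)$-volume of the unit sphere in $\mathbb{R}^n$. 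The monotonicity established in the previous step shows that the right-hand side is nondecreasing in $R$, giving (\ref{eq2}).

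I do not anticipate any real obstacle: the essential idea is simply the well-chosen multiplier $r^n u'(r)$, which packages the scaling information and the Hamiltonian monotonicity into a single identity. The only minor technical points are the vanishing of $u'(0)$ and of the boundary contribution at $r=0$, both immediate from the $C^2$ hypothesis.
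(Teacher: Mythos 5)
Your proposal is correct and follows essentially the same route as the paper: the Hamiltonian identity $\frac{d}{dr}\bigl(\tfrac12|u'|^2-W(u)\bigr)=-\tfrac{n-1}{r}|u'|^2$ integrated from $0$ gives (\ref{eq1}), and the Pohozaev multiplier $r\,u'(r)$ (equivalently your $r^n u'(r)$ in exact-derivative form) expresses $R^{-n}\int_{B(0,R)}\{\tfrac{n-2}{2}|\nabla u|^2+nW(u)\}\,dx$ as a constant multiple of $W(u(R))-\tfrac12|u'(R)|^2$, whose monotonicity yields (\ref{eq2}). Your packaging of the Pohozaev step as a single exact derivative, concluding directly from the monotonicity of $-e(R)$ rather than dividing and differentiating, is a slightly cleaner but equivalent presentation.
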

\begin{proof}
We know that
\[
u''+\frac{n-1}{r}u'-\nabla W(u)=0,\ \ r>0,\ \ u'(0)=0.
\]
So, letting
\[
e(r)=\frac{1}{2}\left|u'(r)\right|^2-W\left(u(r) \right),\ \ r>0,
\]
we find that
\begin{equation}\label{eq3}
e'(r)=-\frac{n-1}{r}\left|u'(r)\right|^2,\ \ r>0.
\end{equation}
Then, estimate (\ref{eq1}) follows at once by integrating the above
relation over $(0,R)$.

By Pohozaev's identity (the idea is to test the equation by $r
u'(r)$, see for instance \cite[Ch. 5]{serfaty}), for $R>0$, we have
that
\[
\frac{1}{R}\int_{B(0,R)}^{}\left\{\frac{n-2}{2}|\nabla u |^2+ n
W(u)\right\}dx= \int_{\partial B(0,R)}^{}\left\{W\left(u(R)
\right)-\frac{1}{2}\left|u'(R)\right|^2
\right\}dS=-\mathcal{H}^1\left\{\mathbb{S}^1\right\}R^{n-1}e(R).
\]
Then, we can arrive at (\ref{eq2}) by dividing both sides by
$R^{n-1}$, differentiating, and using (\ref{eq3}).
\end{proof}

\begin{rem}
 Radial solutions to the Allen-Cahn equation (\ref{eqallenSca}), decaying to zero in
an oscillatory manner, as $r\to \infty$, have been constructed in
\cite{guiRadial}.
\end{rem}

\end{document}